\begin{document}
\title{Examples of defining groups by finite automata}
\author{Victoriia Korchemna}
\thanks {The author expresses her thanks to A. S. Oliynyk for introducing her to the topic of automata transformations} 
\date{28.10.2018}
\theoremstyle{plain}
\newtheorem{theorem}{Theorem}
\newtheorem{proposition}{Proposition}
\newtheorem{definition}{Definition}
\newtheorem{remark}{Remark}
\maketitle 
	
\begin{abstract}
We construct the groups $<A,B,C \;| \; A^2,B^2,C^2,(ABC)^2>$ and\\ $<A,B \;| \; A^2,B^4,(AB)^4>$, using 3-state automata over the alphabets $\{1,2,3\}$ and $\{1,2,3,4\}$. In addition, we show, how to define direct powers of $G$  by automaton (when for $G$ it's given), keeping the alphabet.\\
\end{abstract}	
  
\section{Introduction and definitions}
 \subsection{Rooted tree of words and it's isomorphisms}
 Let $X$ be a finite set, which will be called alphabet with elements called letters. We always suppose $|X|>1$. Let $X^*$ be the free monoid generated by $X$. The elements of this monoid are finite words $x_1x_2 ... x_n,\; x_i \in X$, including the empty word $\emptyset$. Denote by $X^w$ the set of all infinite words $x_1x_2 ... x_n ...,\; x_i \in X$.\\
 The set $X^*$ is naturally a vertex set of a rooted tree, in which two words are connected by an edge if and only if they are of the form $v$ and $vx$, where $v\in X^* $, $x \in X.$ The empty word $\emptyset$ is the root of the tree $X^*$.

 A map $f:X\to X$ is an endomorphism of the tree $X$, if for any two adjacent vertices $v$, $vx\in X^*$ the vertices $f(v)$ and $f(vx)$ are also adjacent, so that there exist $u \in X^*$ and $y \in X$ such that $f(v) = u$ and $f(vx) = uy$. An automorphism is a bijective endomorphism.
 
 \subsection{Restrictions} 
 Let $g : X^* \to X^*$ be an endomorphism of the rooted tree $X$. For each vertex $v \in X^*$ we can determine an endomorphism $g|_v : X^* \to  X^*$ by the condition\\
 $g(vw) = g(v)g|_{v(w)}.$\\
 We call the endomorphism $g|_v$ restriction of $g$ in $v$. We have the following obvious properties of restrictions:\\
 $g|_{v_1v_2} = (g|_{v_1})|_{v_2}$,\\
 $(g_1 \cdot g_2)|_v = g_1|_v \cdot g_2|_{g_1(v)}$.\\\\

 \subsection{Automata}
 An automaton A is a quadruple $(X,Q,\pi,\lambda)$, where
 \begin{itemize}
 	\item $X$ is an alphabet
 	\item $Q$ is a set of states of the automaton
 	\item $\pi : Q \times X \to X$ is a map, called the transition function of the automaton
 	\item $\lambda : Q \times X \to X$ is a map, called the output function of the automaton
 \end{itemize}
 An automaton is finite if it has a finite number of states. The maps $\pi,\lambda$ can be extended on $Q \times X^*$ by the following recurrent formulas:\\
 $\pi(q,\emptyset) = q$, $\pi(q,xw) = \pi(\pi(q,x),w)$,\\
 $\lambda(q,\emptyset) = \emptyset$,  $\lambda(q,xw) = \lambda(q,x)\lambda(\pi(q,x),w)$,\\
 where $x \in X$, $q \in Q$, and $w \in X^*$ are arbitrary elements. Similarly, the maps $\pi,\lambda$ are
 extended on $Q \times X^w$.
 
 An automaton $A$ with a fixed state $q$ is called initial and is denoted by $A_q$.
 Every initial automaton defines the automorphism $\lambda(q,\cdot)$ of the rooted tree $X^*$, which we also denote by $A_q(\cdot) = \lambda(q,\cdot)$ (or q($\cdot$) if it is clear, which automaton it belongs to). We denote by $e$ a trivial state of automaton, i.e., such a state that defines a trivial automorphism of $X^*$. The action of
 an initial automaton $A_q$ can be interpret as the work of a machine, which being in the state $q$ and reading on the input tape a letter $x$, goes to the state $\pi(q,x)$, types on the output tape the letter $\lambda(q,x)$, then moves both tapes to the next position and proceeds further.
 \begin{center} 	
 	\includegraphics[width=8cm, trim=0 450 0 300,clip]{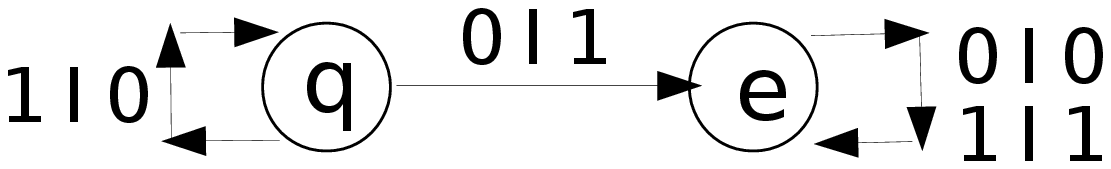} 
 \end{center}
 An automaton $A$ can be represented (and defined) by a labelled directed graph,
 called the Moore diagram, in which the vertices are the states of the automaton and
 for every pair $(q,x) \in Q \times X $ there is an edge from $q$ to $\lambda(q,x)$ labelled by $x|\pi(q,x)$.\\
 Here is the Moore diagram of automaton, called the adding machine. 
 Consider a word of length $l \in \mathbb N$ as a binary number, with lower digits on the left side. If the automaton gets the word in state q, it adds 1 modulo $2^l$ to it. 
  
 \subsection{Permutational wreath products}
 Let H be a group acting (from the left) by permutations on a set X and let G be an arbitrary group. Then the (permutational) wreath product $H \wr G$ is the semi-direct product $H \ltimes G^X$, where $H$ acts on the direct power $G^X$ by the respective permutations of the direct factors.
 Every element of the wreath product $H \wr G$ can be written in the form $h \cdot g$, where $h\in H$ and $g \in G^X$. If we fix some indexing $\{x_1,...,x_d\}$ of the set $X$, then $g$ can be written as $(g_1,...,g_d)$ for $g_i \in G$. Here $g_i$
 is the coordinate of $g$, corresponding to $x_i$. Then multiplication rule for elements $h \cdot (g_1, ... , g_d) \in H \wr G$ is given by the formula:\\
 $\alpha(g_1,...,g_d) \cdot \beta(f_1,...,f_d) = \alpha\beta(g_1f_{\alpha(1)}, ... , g_df_{\alpha(d)})$,
 where $g_i, f_i \in G$, $\alpha, \beta \in H$ and $\alpha(i)$ is the image of $i$ under the action of $\alpha$, i.e., such an index that $\alpha(x_i) = x_{\alpha(i)}$.
 
 \subsection{Wreath recursion}
 We have the following well known fact:
 
 \begin{proposition}
 Denote by $Aut X^*$ the automorphism group of the rooted tree $X^*$ and by $S(X)$ the symmetric group of all permutations of $X$. Fix some indexing $\{x_1, . . . , x_d\}$ of $X$. Then we have an isomorphism\\
 $\psi : AutX^* \to S(X) \wr AutX^*$, given by\\
 $\psi(g) = \alpha(g|_{x_1},...,g|_{x_d})$, where $\alpha$ is the permutation equal to the action of $g$ on $X \subset X^*$.
\end{proposition}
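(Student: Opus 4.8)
The plan is to verify the four standard requirements on $\psi$: that it is well-defined, that it is a homomorphism, that it is injective, and that it is surjective; I would handle injectivity and surjectivity together by exhibiting a two-sided inverse. For well-definedness the permutation $\alpha$ is visibly an element of $S(X)$, so the only real point is that each restriction $g|_{x_i}$ is again an \emph{automorphism} of $X^*$, not merely an endomorphism. This follows from two observations: first, an endomorphism of $X^*$ is level-preserving (it sends adjacent $v,vx$ to adjacent $u,uy$ with $|u|=|v|$, so $|g(z)|=|z|$ by induction on $|z|$); second, from the defining identity $g(vw)=g(v)\,g|_v(w)$ together with bijectivity of $g$ one reads off bijectivity of $g|_v$ — e.g.\ given $g(v)u$, its $g$-preimage $z$ has length $|v|+|u|$, its length-$|v|$ prefix is mapped to $g(v)$ and hence equals $v$, so $z=vz''$ with $g|_v(z'')=u$.

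To see that $\psi$ is a homomorphism, I would take $g_1,g_2\in \operatorname{Aut}X^*$ with first-level actions $\alpha,\beta$. By the wreath multiplication rule the $i$-th coordinate of $\psi(g_1)\psi(g_2)$ is $g_1|_{x_i}\cdot g_2|_{x_{\alpha(i)}}$; since $g_1(x_i)=x_{\alpha(i)}$, the restriction identity $(g_1\cdot g_2)|_v=g_1|_v\cdot g_2|_{g_1(v)}$ applied at $v=x_i$ rewrites this as $(g_1g_2)|_{x_i}$, which is exactly the $i$-th coordinate of $\psi(g_1g_2)$. The permutation parts agree because composing the first-level actions of $g_1$ and $g_2$ yields the first-level action of $g_1g_2$, and this matches the product $\alpha\beta$ in $S(X)$ under the conventions fixed in the text; here one must only be careful to compose permutations in the same order that the group operation and the wreath product use.

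For bijectivity I would construct $\psi^{-1}$ explicitly. Given $\alpha(h_1,\dots,h_d)\in S(X)\wr \operatorname{Aut}X^*$, define $g\colon X^*\to X^*$ by $g(\emptyset)=\emptyset$ and $g(x_iw)=x_{\alpha(i)}\,h_i(w)$ for every $i$ and every $w\in X^*$; this is unambiguous since each nonempty word has a unique first letter. One checks that $g$ is a tree endomorphism (for adjacent $v,vx$: if $v=\emptyset$ this is immediate, and if $v=x_iv'$ then $g(v)=x_{\alpha(i)}h_i(v')$ and $g(vx)=x_{\alpha(i)}h_i(v'x)$ are adjacent because $h_i$ is an endomorphism), and that it is bijective with inverse $x_jw\mapsto x_{\alpha^{-1}(j)}\,h_{\alpha^{-1}(j)}^{-1}(w)$, hence $g\in\operatorname{Aut}X^*$. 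From the definition $g$ has first-level action $\alpha$, and comparing $g(x_iw)=x_{\alpha(i)}h_i(w)$ with the canonical decomposition $g(x_iw)=g(x_i)\,g|_{x_i}(w)$ (unique because $g$ is level-preserving) gives $g|_{x_i}=h_i$; thus $\psi(g)=\alpha(h_1,\dots,h_d)$ and $\psi$ is onto. Injectivity is the same computation read backwards: if $\psi(h)=\psi(g)$ then $h$ and $g$ agree on $X$ and have equal restrictions there, so $h(x_iw)=h(x_i)h|_{x_i}(w)=g(x_i)g|_{x_i}(w)=g(x_iw)$ for all words, whence $h=g$.

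I expect the bulk of the work to be in the bijectivity step — concretely, checking that the explicitly defined $g$ really is a well-defined bijective endomorphism of the tree and that its first-level restrictions are precisely the prescribed $h_i$. The homomorphism property, by contrast, should be essentially a formal consequence of the two restriction identities already quoted in the text, the only delicate point being the (easy but error-prone) matching of composition conventions between $\operatorname{Aut}X^*$, $S(X)$, and the wreath product.
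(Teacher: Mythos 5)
Your proof is correct, but note that the paper itself offers no argument for this proposition: it is quoted as a well-known fact (it appears with proof in the cited literature, e.g.\ Nekrashevych's \emph{Self-similar groups}), so there is no in-paper proof to compare against. Your argument is the standard one and uses exactly the ingredients the paper sets up: level- and prefix-preservation to see that restrictions of automorphisms are again automorphisms, the identity $(g_1\cdot g_2)|_v=g_1|_v\cdot g_2|_{g_1(v)}$ at $v=x_i$ to match the wreath multiplication rule coordinatewise, and the explicit assignment $g(x_iw)=x_{\alpha(i)}h_i(w)$ as a two-sided inverse of $\psi$. The only cosmetic omission is that in the well-definedness step you spell out surjectivity of $g|_v$ but leave its injectivity implicit; it follows in one line, since $g|_v(w_1)=g|_v(w_2)$ gives $g(vw_1)=g(vw_2)$, hence $w_1=w_2$ by injectivity of $g$. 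With that remark added, the conventions and computations all agree with those fixed in Sections 1.2, 1.4 and 1.5 of the paper.
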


 We usually identify $g \in Aut X^*$ with it's image $\psi(g) \in S(X) \wr Aut X^*$, so that we write $g = \alpha(g|_{x_1},...,g|_{x_d})$. 
 The relation is called wreath recursion. It is a compact way to define recursively
 automorphisms of the rooted tree X. For example, the relation
 $q = \pi(e, q)$,
 where $\pi$ is the transposition $(0, 1)$ of the alphabet $X = \{0, 1\}$, defines an automorphism of the tree $\{0, 1\}^*$, coinciding with the transformation, defined by the state $q$ of the adding machine.
 In general, every invertible finite automaton with the set of states $g_1, ... , g_n$ is described by recurrent formulas:
 \begin{center}
 $g_1 = \tau_1(h_{11}, h_{12}, . . . , h_{1d})$\\
 ...\\
 $g_n = \tau_n(h_{n1}, h_{n2}, . . . , h_{nd})$
 \end{center}
 where $\{h_{ij}:\;1\le i \le n, \; 1\le j \le d\}$ = $\{g_i:\; 1\le i \le n\}$ and $\tau_i$ is the action of $g_i$ on $X$.
 Conversely, any set of formulas of this type, for which $\tau_i$ are arbitrary permutations and each $h_{ij}$ belongs to the set $\{g_1, . . . , g_n\}$, uniquely defines an invertible automaton with the set of states $\{g_1, . . . , g_n\}$.
\\\\
\newpage
\section{Automaton, that defines the group\\ $G_{ABC}=<A,B,C\;|\;A^2,B^2,C^2,(ABC)^2>$}

Although it is known about some groups, that they can be defined by finite automata, such representations are often complicated. For example, number of states of the automata can be much greater then the number of group's generators. From this point of view $G_{ABC}$(one can see it's Kelly's graph below) is a very good group.

\begin{center}		
\includegraphics[width=6cm, trim=100 300 100 100, clip]{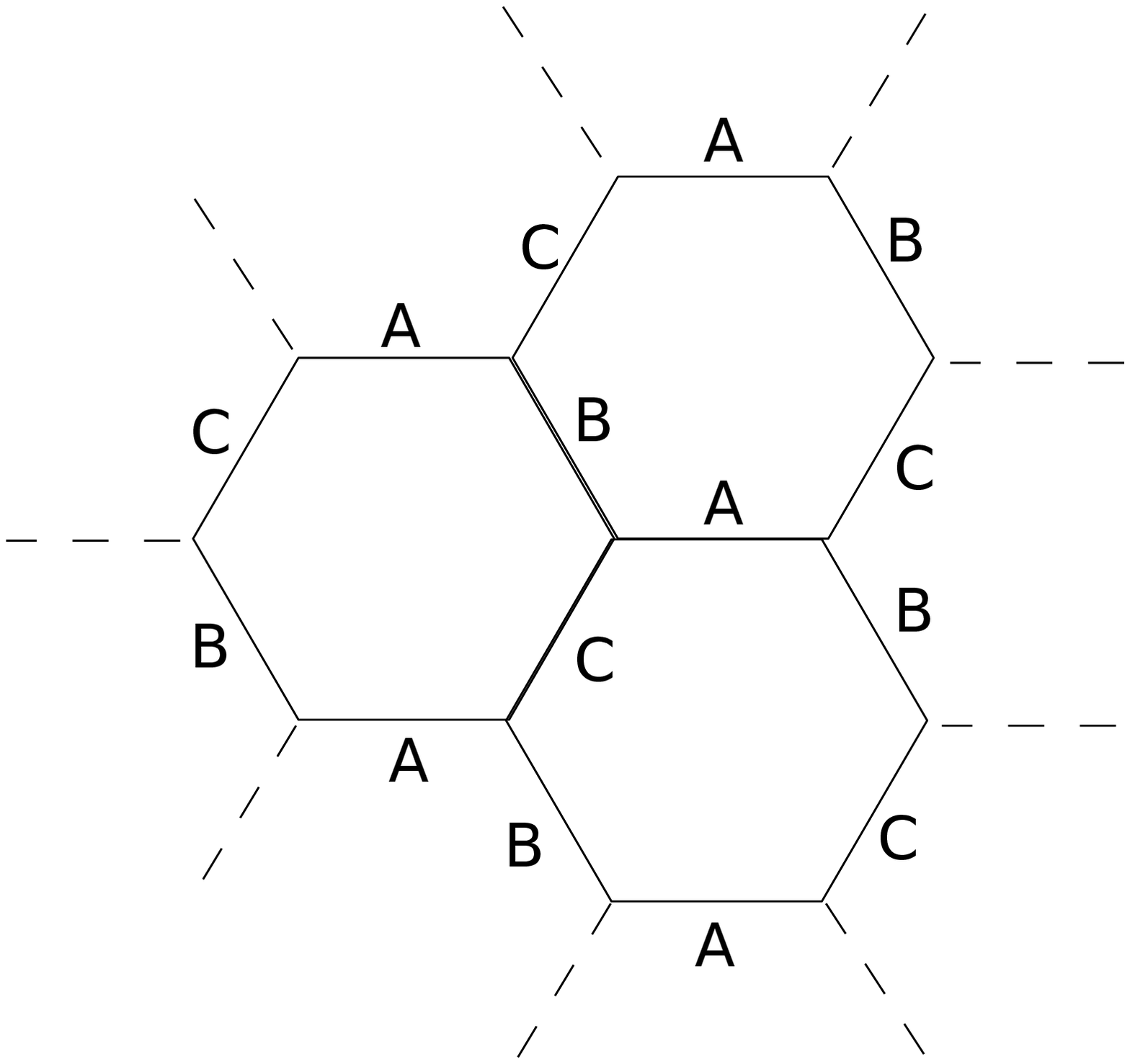}
\end{center}

\begin{theorem}
 $G_{ABC}$ can be defined by automaton with 3 active states. Moreover, these states correspond to generators $A,B,C$ of the group.
\end{theorem}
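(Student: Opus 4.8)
The plan is the standard recipe for certifying a self-similar presentation: write down a wreath recursion whose three active states are labelled $A,B,C$, check that the four relators act trivially on the tree, and then prove that the resulting epimorphism $\varphi\colon G_{ABC}\twoheadrightarrow\langle A,B,C\rangle\leq\operatorname{Aut}X^{*}$ is injective. For the recursion I would look for $A=\alpha(A|_{1},A|_{2},A|_{3})$ and likewise for $B,C$, in which each of $\alpha,\beta,\gamma\in S(X)$ is a transposition of $X=\{1,2,3\}$ (so the states can be involutions) and each section lies in $\{A,B,C,e\}$. The relation $A^{2}=B^{2}=C^{2}=e$ only links the two sections over the moved letters and forces the section over the fixed letter to be an involution, so it costs nothing. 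The one relator that has to be engineered is $(ABC)^{2}$: a product of three transpositions of $\{1,2,3\}$ is again a transposition, so $(ABC)^{2}$ automatically has trivial root action and is pinned down by its first-level sections; one arranges the recursion so that the nontrivial such section is a word like $ABA$ or $BCA$, which is automatically an involution once the other relators hold (from $A^{2}=B^{2}=C^{2}=(ABC)^{2}=1$ one gets $ABC=CBA$, whence e.g. $(BCA)^{2}=BC\cdot ABC\cdot A=BC\cdot CBA\cdot A=1$). A short search through such recursions produces a working automaton; it must avoid several degenerate choices — various ``symmetric'' recursions make $\{AB,BC,CA\}$ a set of passive states closed under restriction, hence all trivial, while other choices make $AB$ of order $2$ or make $C$ fall into $\langle A,B\rangle$, so that the image is only $\mathbb{Z}/2$, $(\mathbb{Z}/2)^{2}$ or the infinite dihedral group.

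Once the recursion is fixed, $A^{2}=B^{2}=C^{2}=e$ and $(ABC)^{2}=e$ follow from the uniqueness of solutions of wreath recursions: each such element has trivial root action, and a downward induction on the tree, using the relators already proved, shows that all of its iterated sections are trivial. This yields the epimorphism $\varphi$. Now $G_{ABC}$ is completely understood: setting $D=ABC$ identifies $G_{ABC}=\langle A,B,C,D\mid A^{2},B^{2},C^{2},D^{2},ABCD\rangle$ with the orbifold fundamental group of the Euclidean sphere $S^{2}(2,2,2,2)$, so $G_{ABC}\cong\mathbb{Z}^{2}\rtimes_{-1}\mathbb{Z}/2$; concretely $N:=\langle AB,BC\rangle$ is free abelian of rank $2$, normal of index $2$, and $A$ inverts every element of $N$. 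Hence $\varphi$ is an isomorphism as soon as (i) $\ker\varphi\subseteq N$ and (ii) $\varphi|_{N}$ is injective, since any normal subgroup not contained in $N$ kills some element of $G_{ABC}\setminus N$, and any nonzero subgroup of $N\cong\mathbb{Z}^{2}$ makes the quotient virtually cyclic.

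For (i): composing the root-action homomorphism of $\operatorname{Aut}X^{*}$ with the sign map $S(X)\to\mathbb{Z}/2$ gives a homomorphism $\langle A,B,C\rangle\to\mathbb{Z}/2$ sending each of $A,B,C$ (which act as transpositions) to $1$; pulled back along $\varphi$ this is precisely the homomorphism $G_{ABC}\to\mathbb{Z}/2$ with kernel $N$. So every $g\in G_{ABC}\setminus N$ has $\varphi(g)$ acting with an odd, in particular nontrivial, root permutation, whence $\varphi(g)\neq e$ and $\ker\varphi\subseteq N$. For (ii): since $N\cong\mathbb{Z}^{2}$ with basis the images of $AB,BC$, injectivity of $\varphi|_{N}$ says that $(AB)^{m}(BC)^{n}=e$ in $\operatorname{Aut}X^{*}$ only for $m=n=0$, and I would prove this by infinite descent. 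The root permutation of $(AB)^{m}(BC)^{n}$ is a fixed power of a $3$-cycle (here the exact automaton enters), so triviality forces a divisibility condition on $(m,n)$; reading off the first-level sections then rewrites $(AB)^{m}(BC)^{n}$ as a word of the same shape in $AB$, $BC$ and $AC=AB\cdot BC$ with strictly smaller exponents. Because $AB$, $BC$ and $AC$ are each of infinite order — each of them, after passing to a suitable power, being built out of a base-$3$ adding machine, whose infinite order is the usual descent — the process terminates only at $(m,n)=(0,0)$. Combining (i) and (ii), $\ker\varphi=1$, $\varphi$ is an isomorphism, the three states are active by construction, and the theorem follows.

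I expect the hard part to be the interplay between finding the recursion and running the descent in (ii): both amount to certifying that $AB$ and $BC$ generate a genuine $\mathbb{Z}^{2}$ inside $\operatorname{Aut}X^{*}$ rather than something smaller, and with only three letters and three states there is almost no slack, so most naive recursions collapse and the bookkeeping in the descent — keeping track of every auxiliary infinite-order claim and of which section feeds back into which — is the delicate point.
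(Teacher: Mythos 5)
Your structural framework is sound: $G_{ABC}$ is indeed the $(2,2,2,2)$ orbifold group, $N=\langle AB,BC\rangle$ is free abelian of rank $2$ and of index $2$ with all generators inverting it, and reducing injectivity of $\varphi$ to (i) $\ker\varphi\subseteq N$ and (ii) injectivity on $N$ is a legitimate (and arguably cleaner) alternative to the paper's route, which instead reads eight families of normal-form words off the Cayley graph and kills each by a section computation. But as a proof of the theorem your text has a genuine gap at its very center: the automaton is never produced. The statement to be proved is an existence statement, and ``a short search through such recursions produces a working automaton'' assumes exactly what must be shown. Worse, you impose an extra design constraint -- all three root permutations $\alpha,\beta,\gamma$ are transpositions -- whose satisfiability (with sections in $\{A,B,C,e\}$ and no collapse) you do not establish; note that the paper's actual recursion, $a=(a,c,b)$, $b=(c,a,b)$, $c=(12)(e,e,c)$, violates it, since $a$ and $b$ act trivially at the root. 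This matters because your step (i) is not automaton-independent: it works only if every generator acts oddly on the first level, so with an automaton like the paper's your argument for $\ker\varphi\subseteq N$ would have to be replaced by something that looks deeper into the tree.

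Step (ii) is likewise only a plan. The descent rewriting $(AB)^m(BC)^n$ with smaller exponents, the divisibility condition forced by the root $3$-cycle, and the claim that $AB$, $BC$, $AC$ have infinite order because suitable powers ``are built out of a base-$3$ adding machine'' all depend on the unspecified recursion and are asserted, not verified; this is precisely the bookkeeping the paper carries out explicitly (e.g.\ $(ab)^n=((ac)^n,(ca)^n,e)$, $(ac)^{2k}=((ac)^k,(ca)^k,(bc)^{2k})$, with odd powers nontrivial because the root permutation is $(12)$, and induction closing the loop). The same remark applies to your verification of the relators, which is stated only as ``uniqueness of solutions of wreath recursions'' for a recursion you have not fixed. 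In short: correct target structure and a reasonable reduction, but the two pieces that constitute the theorem -- an explicit $3$-state recursion and the concrete section computations certifying both the relations and the absence of extra ones -- are missing, so the proposal does not yet prove the statement.
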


\begin{proof}
Here is the representation of such automaton by wreath recursion and it's Moor's diagram:\\ 
$a=(a,c,b)$     (1)\\
$b=(c,a,b)$     (2)\\
$c=(12)(e,e,c)$ (3)

\begin{center}	
\includegraphics[width=8cm, trim=50 400 100 200,clip]{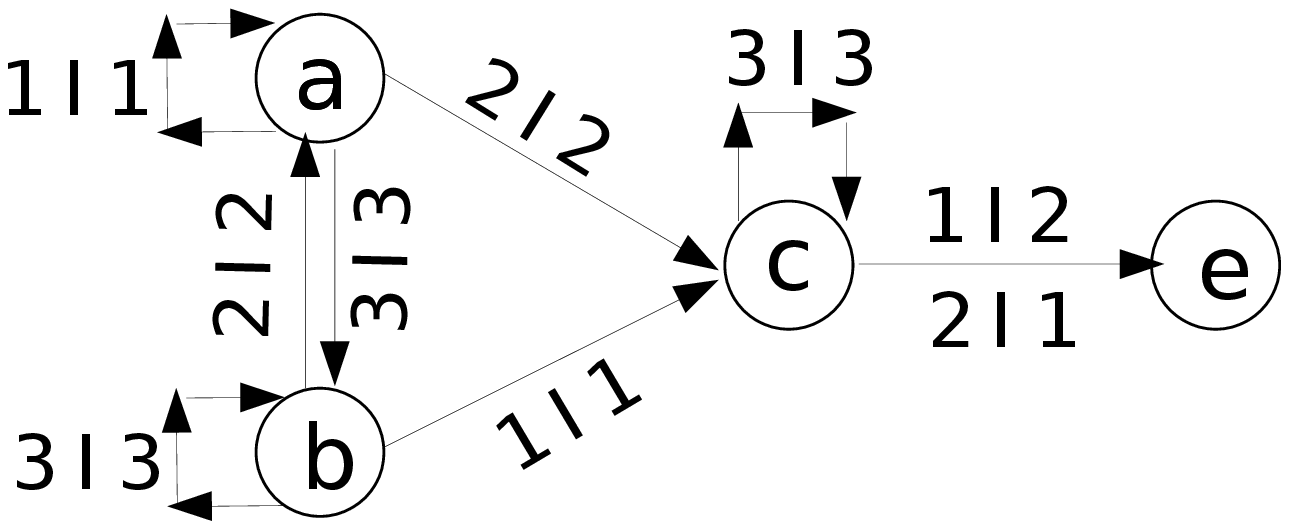}
\end{center}
 
(3) implies $c^2=(e,e,c^2)$, therefore, $c^2=e$.
From (1) and (2) we have:\\
$a^2=(a^2,c^2,b^2)=(a^2,e,b^2)$\\
$b^2=(c^2,a^2,b^2)=(e,a^2,b^2)$\\
Therefore, $a^2=b^2=e$.
Using (1)-(3), we get:\\
$ab=(ac,ca,b^2)=(ac,ca,e)$\\
$abc=(12)(ac,ca,c)$\\
${(abc)}^2=(acca,acca,c^2)=(e,e,e)=e$\\
We are going to show, that the group $<a,b,c>$ doesn't have any extra relations. It's sufficient to prove, that after taking a word of $A, B, C$, which corresponds to not trivial element of $G_{ABC}$, and replacing $A\to a$, $B\to b$, $C \to c$, we get not trivial element of $<a,b,c>$.Redraw the Kelly's graph of $G_{ABC}$:
\begin{center}
\includegraphics[width=10cm, trim=0 425 100 200,clip]{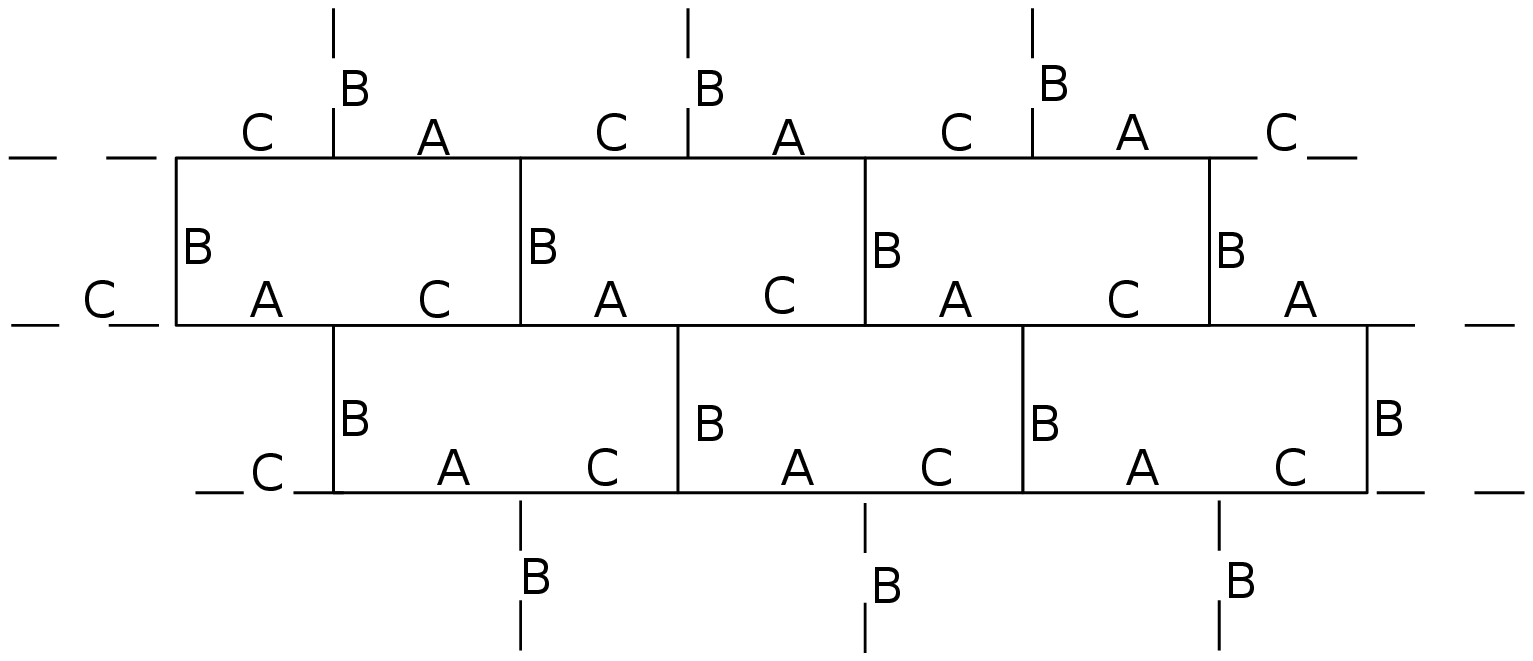}
\end{center}
Assume, to the contrary, that there are paths without cycles, which after replacing $A\to a$, $B\to b$, $C \to c$ define trivial word in $<a,b,c>$.

From the Kelly's graph, we can see that such path can be chosen consisting of 2 parts:\\
1)First part of the path contains $a$ and $b$ only (vertical moving along the graph);\\
2)Second part contains $a$ and $c$ only (horizontal moving).\\
Therefore, it's sufficient to show that all types of words below are not trivial in $<a,b,c>$:\\\\
$[1]:\;{(ab)}^k{(ac)}^m$\\
$[2]:\;{(ab)}^k{(ca)}^m$\\
$[3]:\;{(ab)}^k{(ac)}^ma$\\
$[4]:\;{(ab)}^k{(ca)}^mc$\\
$[5]:\;{b(ab)}^k{(ac)}^m$\\
$[6]:\;{b(ab)}^k{(ca)}^m$\\
$[7]:\;{b(ab)}^k{(ac)}^ma$\\
$[8]:\;{b(ab)}^k{(ca)}^mc$, where $k$ and $m$ are non-negative integers.\\\\
Multiplying [7] left and right by $a$, we get [1]. In the same way [5],[6],[8] can be reduced to [2],[3],[4], so it's enough to consider [1]-[4].

Firstly we will show that all words ${(ab)}^n$, ${(ac)}^n$, ${(bc)}^n$ (and then their inverses ${(ba)}^n$, ${(ca)}^n$, ${(cb)}^n$) are not trivial for every $n \in \mathbb{N}$.\\
All odd powers of $ac$ and $bc$ are not trivial, because their permutations are $(12)$\\
$(ac)^{2k-1} \ne e$, $(bc)^{2k-1} \ne e$ $k \in \mathbb{N}\;$ (*)\\
$ab=(ac,ca,e)\;=>\;(ab)^n=((ac)^n,(ca)^n,e) \;(**)$\\
$bc=(12)(c,a,bc)\;=>\;(bc)^2=(ca,ac,(bc)^2)\;=>$\\
$=>\;(bc)^{2k}=((ca)^k,(ac)^k, (bc)^{2k}) \;(***)$\\
$ac=(12)(a,c,bc)\;=>\;(ac)^2=(ac,ca,(bc)^2)\;=>$\\
$=>\;(ac)^{2k}=((ac)^k,(ca)^k, (bc)^{2k})$\\
 By induction, using non-triviality of odd powers of $(ac)$, we get non-triviality of all it's positive integer powers. Then (**) implies $(ab)^n \ne e$, $n \in \mathbb{N}$ and (***) implies $(bc)^{2k} \ne e$, $k \in \mathbb{N}$. Combining the last result with $(*)$, we get $(bc)^{n} \ne e$, $n \in \mathbb{N}$

Further, if some information (permutation or word) is not needed to make a conclusion, we will replace it by '!'\\
 $(ac)^n=!(!,!,(bc)^n)$\\ 
 $(ca)^n=!(!,!,(cb)^n)$\\
Using obtained representations of  ${(ab)}^n$, ${(ac)}^n$, ${(bc)}^n$, ${(ba)}^n$, ${(ca)}^n$, ${(cb)}^n$, we show non-triviality of [1]-[4]. We consider only cases $k \ge 1, \;m \ge 1$, because $k=0$ and $m=0$ are already considered.\\
$(ab)^k(ac)^m=!(!,!,(bc)^m)\ne e$;\\
$(ab)^k(ca)^m=!(!,!,(cb)^m)\ne e$;\\
$(ab)^k(ac)^ma=!(!,!,(bc)^mb)\ne e$, in another case $(bc)^mb=e$, multiplying the last equality left and right by b, then by c and so on we get $c=e$.\\
Similarly, $(ab)^k(ca)^mc=!(!,!,(cb)^mc)\ne e$\\
It finishes the proof of non-triviality of [1]-[8].
\end{proof}
\newpage 

\section{Automaton, that defines the group\\ $G_{AB}=<A,B \; |\; A^2, B^4, (AB)^4>$}
   
Another group we are going to construct is $G_{AB}$. One can see it's Kelly's graph below. Although $G_{AB}$ has two generators, we will use an automaton with $3$ states: two of then correspond to $A$ and $B$, the third - to $B^2$.

\begin{center}	
\includegraphics[width=10cm, viewport=0 100 700 450, clip]{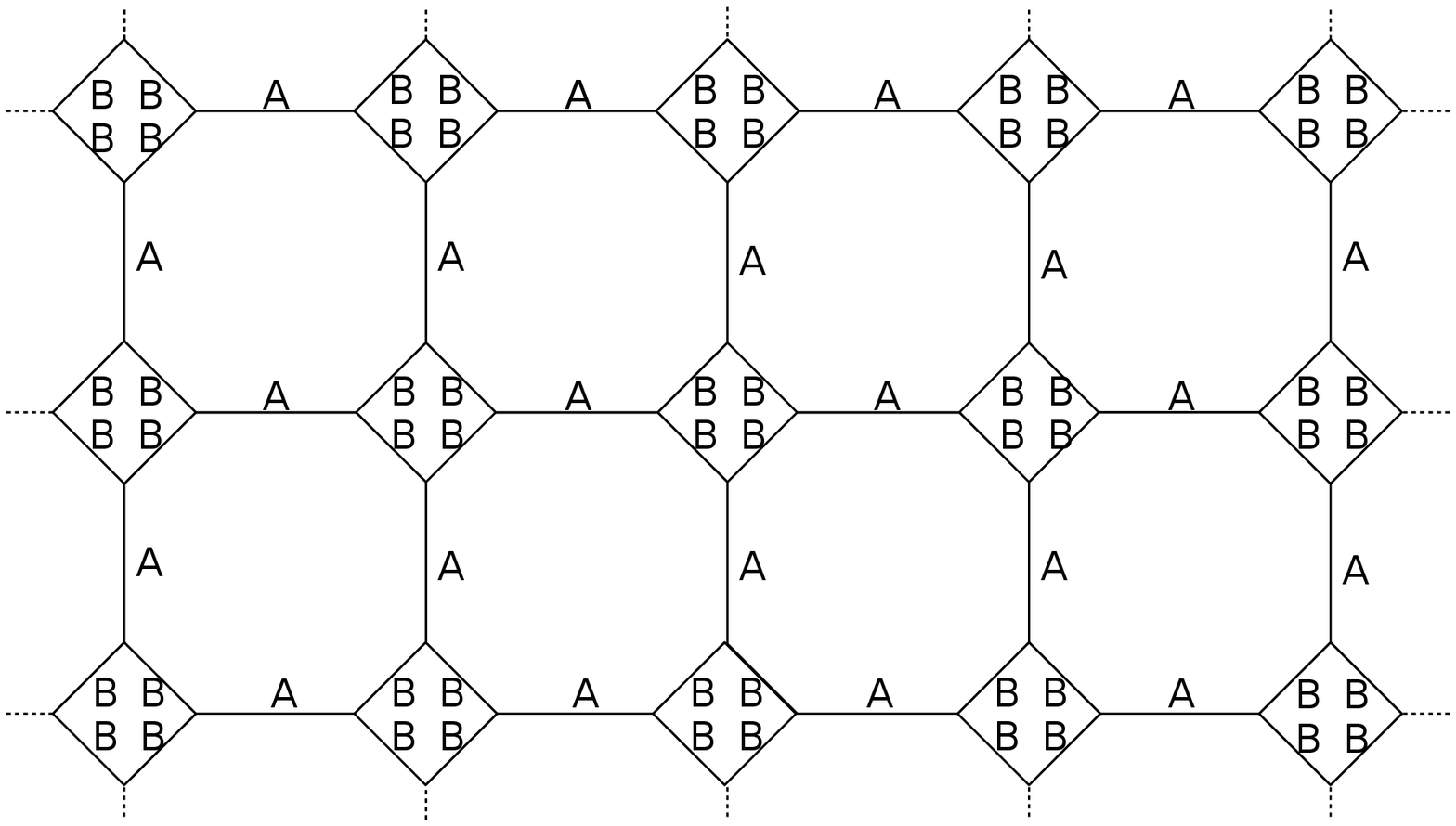} 
\end{center}
	
\begin{theorem}	
 $G_{AB}$ can be defined by automaton with $3$ active states over the alphabet $\{1,2,3,4\}$.
\end{theorem}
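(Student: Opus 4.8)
The plan is to imitate the strategy of Theorem 1. First I would write down an explicit $3$-state invertible automaton over $X=\{1,2,3,4\}$ by a wreath recursion of the form $a=\sigma_a(\,\cdot\,)$, $b=\sigma_b(\,\cdot\,)$, $d=\sigma_d(\,\cdot\,)$, where the three states $a,b,d$ are meant to realize $A$, $B$ and $B^2$ respectively. Accordingly $\sigma_a$ and $\sigma_d$ should be permutations of order $2$ (or trivial) and $\sigma_b$ a $4$-cycle, with coordinate entries drawn from $\{a,b,d,e\}$ chosen so that $d$ behaves like $b^2$. With such a recursion in hand, checking that the relators hold is a finite computation: one computes the restrictions of $a^2$, $b^4$ and $(ab)^4$ at all vertices and observes that the recursion forces $a^2=e$, $b^2=d$ together with $d^2=e$, and $(ab)^4=e$, exactly as $c^2=e$, $a^2=b^2=e$, $(abc)^2=e$ were obtained in Theorem 1 from formulas (1)--(3).

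The substantive part is the converse: that $\langle a,b,d\rangle$ (equivalently $\langle a,b\rangle$) satisfies no relation beyond those generated by $A^2,B^4,(AB)^4$. As in Theorem 1 I would read off the Kelly (Cayley) graph of $G_{AB}$ --- here a product-like grid built from an $A$-direction of period $2$ and a $B$-direction of period $4$ --- and argue that every element of $G_{AB}$ is represented by a cycle-free path in this graph, and that such a path can be normalized into one of a bounded list of word shapes: powers $(ab)^k$, $(ad)^k=(ab^2)^k$, and their conjugates or translates by a single letter $a$, $b$ or $d$ on the left and on the right (the analogue of the families $[1]$--$[8]$). Conjugating by the outer letters reduces the list to a handful of genuinely distinct families, exactly as $[5]$--$[8]$ were pushed back to $[1]$--$[4]$.

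It then remains to prove non-triviality of each surviving family by induction on length, using the self-similarity of the automaton. The base of the induction is supplied by the root permutations: words whose image in $S(X)$ is a nontrivial permutation (e.g.\ odd ``mixed'' powers, or anything mapping to the $4$-cycle $\sigma_b$ or to $\sigma_a$) are immediately nontrivial, which is the role played by $(*)$ in Theorem 1. For the remaining even, permutation-trivial words one computes a restriction $g\mapsto g|_v$ at a well-chosen vertex $v$ and checks that it sends a word of a given family to a strictly shorter word of the same or an easier family, so that the induction closes; the ``multiply on both sides by the outer letter and iterate until a generator equals $e$'' trick handles the cases with a leftover single letter, just as $(bc)^m b=e\Rightarrow c=e$ was ruled out.

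The main obstacle I anticipate is two-fold. First, one must actually find a recursion for which these restriction identities are \emph{closed} --- the set of word shapes that appears upon restricting must not grow --- and over a $4$-letter alphabet with a generator of order $4$ there is more room for the bookkeeping to spiral; forcing $d=b^2$ to be simultaneously a state and an honest square is the delicate design constraint. Second, the normal-form analysis coming from the Kelly graph is heavier than in the $G_{ABC}$ case because the $B$-axis has period $4$ rather than $2$, so the list of families to check (and the induction schemes linking them) is correspondingly longer; I would expect most of the proof's length to be spent verifying that this finite list is exhaustive and that each member reduces correctly under restriction.
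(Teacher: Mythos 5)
Your outline reproduces the paper's strategy in shape, but it has a genuine gap: you never exhibit the automaton. The theorem is an existence statement about a concrete $3$-state automaton over $\{1,2,3,4\}$, so a proof must actually produce one; you explicitly defer the ``delicate design constraint'' of finding a recursion whose restrictions close up, and that deferred step is precisely the mathematical content of the theorem. The paper's proof rests on the specific wreath recursion $a=(c,a,c,a)$, $b=(1324)(e,a,e,a)$, $c=(12)(34)(e,e,a,a)$, from which one first gets $a^2=c^2=e$, $b^2=c$ (hence $b^4=e$, and $c$ need not be an independent generator), and $(ab)^4=e$, and then --- crucially --- a closed system of explicit restriction formulas for $(ab^2)^{2k}$, $(ab^2)^{2k+1}$, $(ab^2)^{2k}ab$, $(ab^2)^{2k+1}ab$, $(ab^2)^{2k}ab^3$, $(ab^2)^{2k+1}ab^3$. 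Without a concrete recursion, none of the later steps of your plan can even be stated, let alone verified, so nothing is proved.

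Your nontriviality scheme is also vaguer than what is needed, and in one place it does not match how the argument actually goes through. In the paper, all families with total $b$-exponent not divisible by $4$ (your ``odd mixed'' cases, the paper's $[3]$--$[8]$) are killed at once because $a$ has trivial root permutation while $b$ acts as a $4$-cycle; the surviving families $[9]$--$[12]$ must then be split according to the parity of the $(ab^2)$-exponents, and each subcase is settled by an explicit coordinate computation whose output, e.g.\ $(b^2a)^{k+t+1}b^2$ or $(b^2a)^{k+t+2}$, is not a ``strictly shorter word of the same family'' in any naive sense but rather a conjugate of a power of $ab^2$, whose nontriviality is established by a separate induction (odd powers are nontrivial at the root, even powers restrict to half powers). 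So the blanket claim that restriction shortens words and closes the induction cannot substitute for the actual case analysis, and the case analysis itself cannot be carried out until the automaton is fixed.
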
	

\begin{proof}	

Such automaton can be defined as follows:\\
$a=(c,a,c,a)$\\
$b=(1324)(e,a,e,a)$\\
$c=(12)(34)(e,e,a,a)$\\
We have:\\
$a^2=(c^2,a^2,c^2,a^2)$\\
$c^2=(e,e,a^2,a^2)$

\begin{center}
\includegraphics[width=8cm, trim=75 250 100 350,clip]{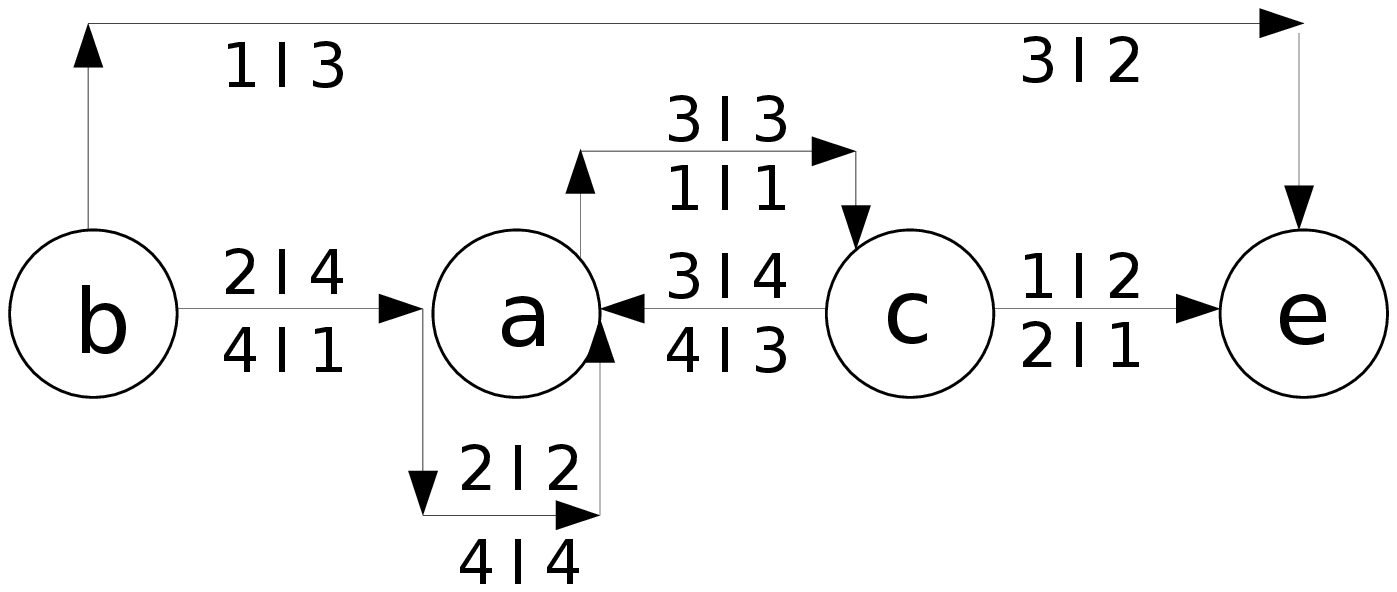} 
\end{center}

The last two equalities imply that $a^2=c^2=e$. Then:\\
$b^2=(12)(34)(e,a^2,a,a)=(12)(34)(e,e,a,a)=c$, so $b^4=c^2=e$ and $a=(b^2, a, b^2, a)$. Further we will use $a$ and $b$ only.\\
$ab=(1324)(b^2,e,b^2,e)$\\
$(ab)^2=(12)(34)(b^4, e, b^2, b^2)=(12)(34)(e, e, b^2, b^2)$\\
$(ab)^4=(e,e,b^4,b^4)=e$.\\
So we have all needed relations in $<a,b>$: $a^2=b^4=(ab)^4=e$. Let us show, that there are no extra relations. It's sufficient to prove, that taking an arbitrary path connecting different vertices of the Kelly's graph of $G_{AB}$ and replacing $A \to a$, $B \to b$, we get a non-trivial element of $<a,b>$. Notice that for every pair of vertices we can choose a path consisting of 5 parts (some of them can be omitted):\\
1)change direction if it is needed (using $B^2$);\\
2)move along the horizontal(vertical) $A$ edges, using $AB^2$ for many times;\\ 
3)rotate (using $B$ or $B^3$);\\
4)move along the vertical(horizontal) $A$ edges, using $AB^2$ for many times;\\ 
5)rotate if it is needed (using $B$, $B^2$ or $B^3$).\\ 
Assume that one of $ab$-words, obtained this way, is trivial. If it starts from $b^2$, we conjugating it by $b^2$ and get another trivial word. So, it's sufficient to show non-triviality of following types of words for each $m \ge 0, n \ge 0$:\\\\
$[1]:\;(ab^2)^n$  $(n \ne 0)$\\
$[2]:\;(ab^2)^n a$\\  
$[3]:\;(ab^2)^n ab$\\ 
$[4]:\;(ab^2)^n ab^3$\\
$[5]:\;(ab^2)^n ab (ab^2)^m$\\ 
$[6]:\;(ab^2)^n ab^3 (ab^2)^m$\\
$[7]:\;(ab^2)^n ab (ab^2)^m a$\\ 
$[8]:\;(ab^2)^n ab^3 (ab^2)^m a$\\
$[9]:\;(ab^2)^n ab (ab^2)^m ab$\\ 
$[10]:\;(ab^2)^n ab^3 (ab^2)^m ab$\\
$[11]:\;(ab^2)^n ab (ab^2)^m ab^3$\\
$[12]:\;(ab^2)^n ab^3 (ab^2)^m ab^3$\\\\
We start from $[1]$:\\
$ab^2=(b^2,a,b^2,a)\cdot(12)(34)(e,e,a,a)=(12)(34)(b^2,a,b^2a,e)$\\
$(ab^2)^2=(b^2a, ab^2, b^2a, b^2a)$\\
$(ab^2)^{2k}=((b^2a)^k, (ab^2)^k, (b^2a)^k, (b^2a)^k)$(*)\\
$(ab^2)^{2k+1}=(12)(34)((b^2a)^k b^2, (ab^2)^k a, (b^2a)^{k+1}, (b^2a)^k)$
As odd powers of $ab^2$ are non-trivial (the permutation is (12)(34)), (*) implies that all positive powers of $(ab)^2)$ are non-trivial.\\
$(ab^2)^n a \ne e$ as conjugated with $a$ or $b^2$.
Notice that $a$ has a trivial permutation, and $b$ has a permutation of order 4. Therefore in each trivial $ab$-word the total power of b divides 4. So $[3]-[8]$ are non-trivial (the total power of $b$ is odd). The properties of $[9]-[12]$ depend of $ab^2$ terms' parity. We have to consider such types of words:\\\\
$[9.1]:\; (ab^2)^{2k+1} ab (ab^2)^{2t} ab$\\ 
$[9.2]:\; (ab^2)^{2k} ab (ab^2)^{2t+1} ab$\\
$[10.1]:\; (ab^2)^{2k} ab^3 (ab^2)^{2t} ab$\\ 
$[10.2]:\; (ab^2)^{2k+1} ab^3 (ab^2)^{2t+1} ab$\\
$[11.1]:\; (ab^2)^{2k} ab (ab^2)^{2t} ab^3$\\ 
$[11.2]:\; (ab^2)^{2k+1} ab (ab^2)^{2t+1} ab^3$\\ 
$[12.1]:\; (ab^2)^{2k+1} ab^3 (ab^2)^{2t} ab^3$\\ 
$[12.2]:\; (ab^2)^{2k} ab^3 (ab^2)^{2t+1} ab^3$\\\\
for arbitrary $k \ge 0, t \ge 0$. Here we omit $(ab^2)^{2k} ab (ab^2)^{2t} ab$ and other types of words, in which the total power of $b$ doesn't divide 4. Firstly we find terms $[9.1]-[12.2]$ consist of:\\\\
$(ab^2)^{2k+1} ab =$\\
$= (12)(34)((b^2a)^k b^2, (ab^2)^k a, (b^2a)^{k+1}, (b^2a)^k) \cdot (1324)(b^2,e,b^2,e) =$\\ 
                      $=(1423)((b^2a)^k b^2, (ab^2)^{k+1}, (b^2a)^{k+1}, (b^2a)^k b^2)$\\\\
$(ab^2)^{2k+1} ab^3 =$\\
$= (1423)((b^2a)^k b^2, (ab^2)^{k+1}, (b^2a)^{k+1}, (b^2a)^k b^2) \cdot (12)(34)(e,e,a,a)=$\\
$=(1324)((b^2a)^{k+1}, (ab^2)^{k+1}a, (b^2a)^{k+1}, (b^2a)^k b^2) $\\\\     
$(ab^2)^{2k} ab =$\\
$= ((b^2a)^k, (ab^2)^k, (b^2a)^k, (b^2a)^k)\cdot (1324)(b^2,e,b^2,e) = $\\$=(1324)((b^2a)^k b^2, (ab^2)^k, (b^2a)^k b^2, (b^2a)^k)$\\\\
$(ab^2)^{2k} ab^3 =$\\
$= (1324)((b^2a)^k b^2, (ab^2)^k, (b^2a)^k b^2, (b^2a)^k)\cdot (12)(34)(e,e,a,a) =$\\$= 
                    (1423)((b^2a)^{k+1}, (ab^2)^k a, (b^2a)^k b^2, (b^2a)^k)$\\\\
Now we consider $[9.1]-[12.2]$.\\\\
$[9.1]:$\\
$(ab^2)^{2k+1} ab (ab^2)^{2t} ab =(1423)((b^2a)^k b^2, (ab^2)^{k+1}, (b^2a)^{k+1}, (b^2a)^k b^2) \cdot$\\$\cdot 
                                    (1324)((b^2a)^t b^2, (ab^2)^t, (b^2a)^t b^2, (b^2a)^t) = (!, !, (b^2a)^{k+t+1} b^2, !) \ne e$, because $(b^2a)^{k+t+1} b^2$ is conjugated with $a$ or $b^2$.\\
$[9.2]:$\\
$(ab^2)^{2k} ab (ab^2)^{2t+1} ab = (1324)((b^2a)^k b^2, (ab^2)^k, (b^2a)^k b^2, (b^2a)^k) \cdot$\\$\cdot 
                                    (1423)((b^2a)^t b^2, (ab^2)^{t+1}, (b^2a)^{t+1}, (b^2a)^t b^2) = (!,!,(b^2a)^k b^2 (ab^2)^{t+1},!)=(!,!,(b^2a)^{k+t+1} b^2, !) \ne e.$\\
$[10.1]:$\\
$(ab^2)^{2k} ab^3 (ab^2)^{2t} ab =  (1423)((b^2a)^{k+1}, (ab^2)^k a, (b^2a)^k b^2, (b^2a)^k) \cdot$\\$\cdot (1324)((b^2a)^t b^2, (ab^2)^t, (b^2a)^t b^2, (b^2a)^t) = ((b^2a)^{k+1+t},!,!,!) \ne e$\\
$[10.2]:$\\
$\;(ab^2)^{2k+1} ab^3 (ab^2)^{2t+1} ab = (1324)((b^2a)^{k+1}, (ab^2)^{k+1}a, (b^2a)^{k+1}, (b^2a)^k b^2) \cdot$\\$\cdot
(1423)((b^2a)^t b^2, (ab^2)^{t+1}, (b^2a)^{t+1}, (b^2a)^t b^2) = ((b^2a)^{k+t+2},!,!,!) \ne e$\\                                        $[11.1]$\\
                                         $(ab^2)^{2k} ab (ab^2)^{2t} ab^3 = (1324)((b^2a)^k b^2, (ab^2)^k, (b^2a)^k b^2, (b^2a)^k)\cdot$\\$\cdot 
                                    (1423)((b^2a)^{t+1}, (ab^2)^t a, (b^2a)^t b^2, (b^2a)^t) = (!,!,!,(b^2a)^{k+1+t})\ne e$\\
$[11.2]:$\\
$(ab^2)^{2k+1} ab (ab^2)^{2t+1} ab^3 = (1423)((b^2a)^k b^2, (ab^2)^{k+1}, (b^2a)^{k+1}, (b^2a)^k b^2)\cdot$\\$\cdot 
                                        (1324)((b^2a)^{t+1}, (ab^2)^{t+1}a, (b^2a)^{t+1}, (b^2a)^t b^2) = 
                                        (!,!,!,(b^2a)^k b^2(ab^2)^{t+1}a) = (!,!,!,(b^2a)^{k+t+2}) \ne e$\\
$[12.1]:$\\
$(ab^2)^{2k+1} ab^3 (ab^2)^{2t} ab^3 = (1324)((b^2a)^{k+1}, (ab^2)^{k+1}a, (b^2a)^{k+1}, (b^2a)^k b^2)\cdot$\\$\cdot
                                        (1423)((b^2a)^{t+1}, (ab^2)^t a, (b^2a)^t b^2, (b^2a)^t) = ((b^2a)^{k+t+1} b^2, !,!,!) \ne e$\\ 
$[12.2]:$\\
$(ab^2)^{2k} ab^3 (ab^2)^{2t+1} ab^3 = (1423)((b^2a)^{k+1}, (ab^2)^k a, (b^2a)^k b^2, (b^2a)^k) \cdot$\\$\cdot 
                                        (1324)((b^2a)^{t+1}, (ab^2)^{t+1}a, (b^2a)^{t+1}, (b^2a)^t b^2) = ((b^2a)^{k+t+1} b^2, !,!,!)\ne e$\\ 
So, the automaton actually defines $G_{AB}$.           
\end{proof}                                        
         
\newpage

\section{Automaton, defining a direct product $G\times G$}
	
Let X be an alphabet, $|X|=d$. For arbitrary words $a,b \in X^w$, $a=a_1...a_n...$, $b=b_1...b_n...$ define $a\times b$ as word from $X^w$ obtained by mixing letters of $a$ and $b$: $$a \times b := a_1b_1...a_nb_n...$$ 

Let A be an automaton with states $q_1, ..., q_n$, defining $G:=G_A$, and it's representation using wreath recursion is as follows:
\begin{center} 
 $q_1=\pi_1(q_{N(1,1)}, ..., q_{N(1,d)})$\\
 ...\\
 $q_n=\pi_n(q_{N(n,1)}, ..., q_{N(n,d)})$
\end{center}  
Here $\pi_i$ are permutations of $X$, $N(i,j)\in\{1,...n\}$, $1 \le i \le n$,  $1 \le j \le d$.  
Define a new automaton $B$ with states $q_1^1, ..., q_n^1,\:q_1^2, ..., q_n^2$ such that:
\begin{center} 
 $q_i^1=\pi_i(q_{N(i,1)}^2, ..., q_{N(i,d)}^2)$\\
 $q_i^2=(q_{N(i,1)}^1, ..., q_{N(i,d)}^1)$
\end{center} 
It's easy to see that
\begin{center}	
  $q_i^1(a \times b)=q_i(a)\times b\;\;\;(1)$\\
  $q_i^2(a \times b)=a\times q_i(b)\;\;\;(2)$
\end{center}	  
So $q_i^1$ and $q_j^2$ commute for each $i,j\in\{1,...,n\}$. It implies that both $G_1:=<q_1^1, ..., q_n^1>$ and $G_2:=<q_1^2, ..., q_n^2>$ are the normal subgroups of $G_B$. According to $(1)$ and $(2)$, they are isomorphic to $G$. Actually, the isomorphisms can be defined $\phi(q_i^1)=q_i$, $\psi(q_i^2)=q_i$. As $<G_1, G_2>=G_B$ and $G_1\cap G_2 ={e}$ (elements of $G_1$ change only odd letters, and of $G_2$ - only even), we have: $G_B \simeq G\times G$, so B defines the group $G\times G$.

\begin{remark} 
	Similarly all positive integer direct powers of $G$ can be obtained. For power $L\in\mathbb{N}$ one can consider the automaton with states $\{q_i^j: \; 1\le i \le n, \; 1\le j \le L\}$, defined as follows:\\\\
     $q_i^1=\pi_i(q_{N(i,1)}^2, ..., q_{N(i,d)}^2)$\\
     $q_i^j=(q_{N(i,1)}^{j+1}, ..., q_{N(i,d)}^{j+1})$, $2\le j \le L-1$\\
     $q_i^L=(q_{N(i,1)}^{1}, ..., q_{N(i,d)}^{1})$\\
    
Then $q_i^j$ changes only letters with position numbers equal to $j$ modulo $L$.
\end{remark}

\newpage
 
\end{document}